\newtheorem{Thm}[equation]{Theorem}
\newtheorem{Cor}[equation]{Corollary}
\newtheorem{Lem}[equation]{Lemma}
\newtheorem{Pro}[equation]{Proposition}
\theoremstyle{definition}
\newtheorem{Def}[equation]{Definition}
\theoremstyle{remark}
\newtheorem{Rem}[equation]{Remark}
\numberwithin{equation}{section}
\renewcommand{\c@figure}{\c@equation}
\newcommand{\maths}[1]{{\bf #1}}
\newcommand{\T}{{\mathcal T}}
\newcommand{\RR}{\maths{R}}
\newcommand{\NN}{\maths{N}}
\renewcommand{\SS}{\maths{S}}
\newcommand{\ra}{\rightarrow}
\newcommand{\bord}{\partial}
\newcounter{fig}
\def
\newcommand{\ack}{\noindent{\bf Acknowledgement.}}
\begin{document}

\title[Dirichlet Problem]{Addendum: the case of closed surfaces.\\  (Boundary Value Problems on Planar Graphs and Flat Surfaces with integer cone singularities, I: The Dirichlet Problem)}

\author{Sa'ar Hersonsky}

\address{Department of Mathematics\\ 
University of Georgia\\ 
Athens, GA 30602}

\urladdr{http://saarhersonsky.wix.com/saar-hersonsky}
\email{saarh@math.uga.edu}
\thanks{This work was partially supported by a grant from the Simons Foundation (\#319163 to Saar Hersonsky).}
\date{\today}
\keywords{electrical networks, harmonic functions on graphs, flat surfaces with conical singularities}
\subjclass[2000]{Primary: 53C43; Secondary: 57M50}

\begin{abstract}

We extend our discrete uniformization theorems  for planar, $m$-connected, Jordan domains  [Journal f\"ur die reine und angewandte Mathematik 670 (2012), 65--92] to closed surfaces of genus $m\geq 1$.

\end{abstract}

\maketitle

\section{Introduction}
\label{se:Intro}

The aim of this addendum to our paper \cite{Her1} is to extend 
the results therein to the case of  closed, triangulated  surfaces, of any non-positive genus. The new ingredient we employ is the {\it Hauptvermutung} for $2$-manifolds with boundary.  
The extension provided here is natural and  straightforward. In fact, Corollary 0.3 in the above mentioned paper already provided  a discrete unifomization theorem for closed surfaces; those that are  obtained by doubling an $m$-connected ($m>1$), planar, Jordan domain along its full boundary. 



\smallskip

By a singular flat surface, we will mean a surface which carries a metric structure locally modeled on the Euclidean plane, except at a finite number of points. These points have cone singularities, and the cone angle is (in general) allowed to take any positive value (see for instance \cite{Tro} for a detailed survey).  Following the convention  in \cite{Her1},  a Euclidean rectangle will denote the image under an isometry of a planar Euclidean rectangle, and  a  singular flat, genus zero compact surface with $m\geq 3$ boundary components, will be called a {\it ladder of singular pairs of pants}.

\smallskip
In order to make the statement of the theorem from \cite{Her1} that will be used here self-contained, recall that in \cite{Her1} we considered a planar, bounded, $m$-connected region $\Omega$, where  $\bord\Omega$ denotes its piecewise linear boundary.  Let $\mathcal{T}$ be a triangulation of $\Omega\cup\bord\Omega$. Let $\bord\Omega=E_1\sqcup E_2$, where $E_1$ is the outermost component of $\bord\Omega$. We invoke a {\it conductance function} on ${\mathcal T^{(1)}}$, making it a {\it finite network}, and use it to define a {\it combintorial Laplacian} $\Delta$ on ${\mathcal T}^{(0)}$. 

\smallskip

For any positive constant $k$, let $g$ be the solution of a {\it discrete Dirichlet boundary value problem}  defined on ${\mathcal T}^{(0)}$ which is determined by  requiring that $g|_{{\mathcal T}^{(0)}\cap E_1}=k, g|_{{\mathcal T}^{(0)}\cap E_2}=0$, and that
$\Delta g=0$ at every interior vertex of ${\mathcal T}^{(0)}$. 
Furthermore, let $E(g)$ be the {\it Dirichlet energy} of $g$, and let $\frac{\bord g}{\bord n}(x)$ denote the {\it normal derivative} of $g$ at the vertex $x\in\bord \Omega$ (cf. \cite[Section 1]{Her1} for further details).

\smallskip


\smallskip

We now recall  one of the main results of \cite{Her1}.

\begin{Thm} {\mbox {\rm \cite[Theorem 0.2]{Her1}}}
  \label{Th:ladder}
 Let  $(\Omega,\bord\Omega,{\mathcal T})$ be a bounded, $m$-connected, planar, Jordan region with $E_2=E_2^1\sqcup E_2^2\ldots \sqcup E_2^{m-1}$.  Let $S_{\Omega}$ be a ladder of singular pairs of pants such that
 \begin{enumerate}
 \item ${\mbox{\rm Length}_{\rm Euclidean}}(S_{\Omega})_{E_1}= \ \ \ \sum_{x\in E_1}\frac{\bord g}{\bord n}(x)$, and
  \item ${\mbox{\rm Length}_{\rm Euclidean}}(S_{\Omega})_{E_2^i}=- \sum_{x\in E_2^i}\frac{\bord g}{\bord n}(x)$, for $i=1,\cdots, m-1$,
 \end{enumerate}
 where $(S_{\Omega})_{E_1}$ and $(S_{\Omega})_{E_2^i}$, for $i=1,\cdots, m-1$, are the boundary components of $S_{\Omega}$.
   Then, there exists
   a mapping $f$ which associates to each edge in  ${\mathcal  T}^{(1)}$ a unique  Euclidean rectangle in $S_{\Omega}$ in such a way that the collection of these rectangles forms a tiling of $S_{\Omega}$. 
   Furthermore, $f$ is  boundary preserving, and $f$ is energy preserving in the sense that  $E(g)= {\rm Area}(S_{\Omega})$. 
 \end{Thm}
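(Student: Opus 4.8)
The plan is to carry out a discrete, multiply-connected analogue of the Brooks--Smith--Stone--Tutte correspondence between electrical networks and tilings by rectangles, using $g$ as the vertical coordinate and its harmonic conjugate as the horizontal one. First I would record that the discrete Dirichlet problem has a unique solution $g$: the combinatorial Laplacian $\Delta$ attached to the conductance function is symmetric and positive definite on functions supported at interior vertices, so prescribing $g=k$ on $E_1$, $g=0$ on $E_2$ and $\Delta g=0$ at interior vertices determines $g$ uniquely, and the maximum principle gives $0\le g\le k$. To each oriented edge $e=(x,y)$ I attach the current $J(e)=c(e)\bigl(g(x)-g(y)\bigr)$; harmonicity of $g$ is exactly Kirchhoff's node law $\sum_{e\ni v}J(e)=0$ at every interior vertex $v$.

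The heart of the construction is the harmonic conjugate. Since $J$ is divergence free at interior vertices, discrete Hodge theory on the planar complex $\mathcal{T}$ produces a flux (stream) function $g^{*}$, defined on the faces of $\mathcal{T}$, whose increment across each edge is the current through that edge; it is single valued on any simply connected subregion but globally multivalued because $\Omega$ is $m$-connected. Its period around each inner boundary component $E_2^{i}$ equals the net flux crossing a cut enclosing that component, namely $-\sum_{x\in E_2^{i}}\frac{\partial g}{\partial n}(x)$, while the period around $E_1$ equals $\sum_{x\in E_1}\frac{\partial g}{\partial n}(x)$. Conservation of current, obtained by summing the node law over all interior vertices, shows these periods are compatible and that the outer period is the sum of the inner ones. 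These periods are precisely the prescribed boundary lengths (1) and (2).

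With both coordinates in hand I define $f$ edge by edge: the edge $e=[x,y]$ is sent to the Euclidean rectangle $R_e$ of height $|g(x)-g(y)|$ and width $|J(e)|$, positioned so that its vertical extent matches the potentials of its endpoints and its horizontal placement is read off from $g^{*}$. The vertices of $\mathcal{T}$ then map to horizontal segments (equipotentials) and the faces to vertical segments (level sets of $g^{*}$); Kirchhoff's law makes the widths of the rectangles around each interior vertex balance, so adjacent rectangles abut without gaps or overlaps, and the cocycle relations for $g$ around each triangle make them fit vertically. The only points where the total angle can fail to be $2\pi$ are the images of vertices and faces, and there the combinatorics forces the cone angle to be a positive integer multiple of $\pi$, yielding the flat structure with integer cone singularities. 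Gluing along the prescribed periods assembles the rectangles into a genus-zero surface with exactly $m$ boundary components, i.e.\ a ladder of singular pairs of pants, whose boundary lengths are (1) and (2) by construction. Finally $\operatorname{Area}(S_\Omega)=\sum_e \operatorname{Area}(R_e)=\sum_e c(e)\bigl(g(x)-g(y)\bigr)^2=E(g)$, giving energy preservation, while $f$ is boundary preserving since $E_1$ and the $E_2^i$ map to the boundary components at heights $k$ and $0$.

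The main obstacle I anticipate is the global consistency of the tiling in the multiply-connected setting: one must verify that the locally defined map $(g^{*},g)$, despite the nontrivial periods of $g^{*}$, descends to a well-defined, locally injective tiling of a single surface with the correct genus and number of boundary components, and that the angles accumulated at the singular vertices are exactly the integer multiples of $\pi$ claimed. Controlling the monodromy of $g^{*}$ and ruling out overlaps of the rectangles near the singular points is the delicate step; the Jordan-region hypothesis and the maximum principle for $g$ are what keep the level-set combinatorics under control.
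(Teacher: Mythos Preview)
Your approach via the harmonic conjugate $g^{*}$ is genuinely different from the paper's. The paper never constructs $g^{*}$; instead it analyzes the level curves of (the affine extension of) $g$, proves that each connected level set is a generalized bouquet of circles, locates a unique singular level curve $L_1$ enclosing all inner boundary components, and cuts $\Omega$ along $L_1$ (and then recursively along further singular level curves inside each simple cycle of $L_1$) into pieces whose interiors are annuli. The annulus theorem is applied to each piece, and the resulting straight Euclidean cylinders are glued along the cut curves; the singular points of $S_{\Omega}$ are exactly the images of the singular vertices on these level curves, and the cone angle $2(n+1)\pi$ at a vertex of index $-n$ is read off directly from the gluing pattern. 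What this buys is explicit constructive control: the surface $S_{\Omega}$ is built cylinder by cylinder, and the ``no gaps, no overlaps'' claim reduces to the already-proved annulus case together with an energy--area identity.

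Your route is the classical BSST one and works cleanly for an annulus, but in the $m$-connected case the step you flag as ``the main obstacle'' is a genuine gap, not a technicality. The stream function $g^{*}$ carries $m-1$ independent periods, one per inner boundary component, so there is no single cylinder or quotient into which $(g^{*},g)$ maps; the sentence ``gluing along the prescribed periods assembles the rectangles into a genus-zero surface with exactly $m$ boundary components'' is precisely the assertion to be proved. To carry this through you must identify where the level sets of $g$ change topological type---the singular vertices in the paper's sense---and show that near each such vertex the rectangles close up with the correct cone angle. That is exactly the content of the paper's level-curve machinery (the index formula, the bouquet structure, the flux-length identities across level curves), and your sketch does not supply a substitute for it. Local Kirchhoff balancing at interior vertices only says that adjacent rectangles share an edge of the right length; it does not control the monodromy of $g^{*}$ or rule out global overlaps when paths wind around different holes. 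Until you either reproduce that level-curve analysis or give an independent argument that the multivalued pair $(g^{*},g)$ descends to a locally injective map onto a well-defined target, the proof is incomplete.
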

In the statement of the theorem,  ``boundary preserving" means that the rectangle associated to an edge $[u,v]$ with $u\in \partial \Omega$ has one of its edges on a corresponding boundary component of the singular surface. 

\smallskip 
Given a domain  as in the theorem, one may look at the {\it closed surface} 
 obtained by doubling the domain along its full  boundary. The following Corollary provides discrete uniformization for this class of surfaces.
 
  \begin{Cor}{ \mbox{\rm \cite[Corollary 0.3]{Her1} } } 
 \label{Cor:surface}
Under the assumptions of Theorem~\ref{Th:ladder}, there exists a canonical pair $(S,f)$, where $S$ is a flat surface with conical singularities of genus $(m-1)$ tiled by Euclidean rectangles, and $f$ is an energy preserving mapping from
 ${\mathcal T}^{(1)}$ into $S$, in the sense that $2E(g)={\rm Area}(S)$.   Moreover, $S$ admits a pair of pants decomposition whose dividing curves have Euclidean lengths given by ${\rm (1)-(3)}$ of Theorem~\ref{Th:ladder}.
 
 \end{Cor}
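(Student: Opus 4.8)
The plan is to obtain the closed surface $S$ by doubling the ladder of singular pairs of pants $S_\Omega$ along its full boundary, mirroring the doubling already performed on the domain side. Given the region $(\Omega, \bord\Omega, \mathcal{T})$ and the tiled surface $S_\Omega$ produced by Theorem~\ref{Th:ladder}, I would first form the doubled domain $\widehat{\Omega} = \Omega \cup_{\bord\Omega} \overline{\Omega}$ by gluing a mirror copy of $\Omega$ along the entirety of $\bord\Omega$, and simultaneously form $S = S_\Omega \cup_{\bord S_\Omega} \overline{S_\Omega}$ by gluing a mirror copy of the ladder along its boundary curves $(S_\Omega)_{E_1}, (S_\Omega)_{E_2^1}, \dots, (S_\Omega)_{E_2^{m-1}}$. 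Since $\Omega$ is a genus-zero planar region with $m$ boundary components, the double $\widehat{\Omega}$ is a closed orientable surface of genus $m-1$, and likewise $S$ inherits genus $m-1$; the new ingredient flagged in the introduction, the Hauptvermutung for $2$-manifolds with boundary, guarantees that the triangulation $\mathcal{T}$ doubles to a genuine combinatorial triangulation $\widehat{\mathcal{T}}$ of the closed surface so that the whole construction is well-defined as a PL object.

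Next I would extend the map $f$ to the double. The edge-to-rectangle correspondence $f$ on $\mathcal{T}^{(1)}$ extends to the mirror copy by applying the mirror isometry on both the domain and target sides; the key compatibility point is that $f$ is boundary preserving, so an edge $[u,v]$ with $u \in \bord\Omega$ has its associated rectangle sitting against a boundary component of $S_\Omega$. When we glue, such an edge is identified with its mirror edge along $\bord\Omega$, and its rectangle is glued to the mirror rectangle along the shared boundary segment in $\bord S_\Omega$. I would check that these gluings are consistent, i.e. that the two half-rectangles meeting along a doubled boundary edge fit together isometrically to tile a neighborhood of the seam without overlap or gap, and that interior vertices of $\mathcal{T}^{(0)}$ together with their mirror images and the (formerly boundary) seam vertices all acquire cone angles that are integer multiples of a common quantum, so that $S$ is a flat surface with conical singularities tiled by Euclidean rectangles.

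For the energy statement, I would use the fact that doubling multiplies both sides of the identity in Theorem~\ref{Th:ladder} by two. The extended function $g$ on $\widehat{\mathcal{T}}^{(0)}$, reflected across the boundary, has Dirichlet energy $2E(g)$ by additivity of energy over the two isometric copies, while the tiled area of $S$ is $2\,\mathrm{Area}(S_\Omega) = 2E(g)$ by the area identity already established for $S_\Omega$; hence $2E(g) = \mathrm{Area}(S)$. The pants decomposition of $S$ is read off from the doubling: the boundary curves $(S_\Omega)_{E_1}$ and $(S_\Omega)_{E_2^i}$ become the dividing curves of $S$, and by parts (1)--(3) of Theorem~\ref{Th:ladder} their Euclidean lengths are exactly the prescribed sums of normal derivatives. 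Finally I would note canonicity: since the pair $(S_\Omega, f)$ is furnished by Theorem~\ref{Th:ladder} and the doubling operation is natural, the resulting pair $(S,f)$ is determined by the data $(\Omega, \bord\Omega, \mathcal{T})$ and the conductance function, up to the mirror symmetry.

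The main obstacle I expect is the seam analysis in the second step: verifying that the rectangles associated to boundary edges glue together into an honest Euclidean tiling of $S$ across the doubling locus, and in particular controlling the cone angles at the seam vertices so that they remain of the admissible (integer cone singularity) type. This is precisely where the boundary-preserving property of $f$ and the PL/Hauptvermutung machinery must be combined carefully; everything else is essentially a doubling-and-additivity bookkeeping argument.
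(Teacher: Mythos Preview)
Your core approach---doubling $S_\Omega$ along its full boundary to obtain the closed surface $S$, and extending $f$ by reflection---is exactly what the paper does. The paper's proof is in fact just two sentences: glue two copies of $S_\Omega$ along corresponding boundary components to obtain $S = S_\Omega \cup_{\partial\Omega} S_\Omega$ of genus $m-1$, and let $\bar f$ restrict to $f$ on each copy. The energy identity $2E(g) = \mathrm{Area}(S)$ and the pants decomposition both follow by additivity, as you say.

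Where you go astray is in invoking the Hauptvermutung. That ingredient is \emph{not} used in the proof of this Corollary at all; the paper brings it in only for the later Theorem~\ref{Th:closedsurface} on an \emph{arbitrary} closed surface, where one cuts along $1$-cycles and must then argue that the two resulting triangulated pieces are combinatorially equivalent (this is where the Hauptvermutung is genuinely needed, to produce isomorphic subdivisions $L_1, L_2$). Here, by contrast, you are \emph{building} the double from two identical copies of $(\Omega, \mathcal{T})$, so there is nothing to verify: the doubled triangulation exists tautologically. Your ``main obstacle''---the seam analysis and cone-angle control along the doubling locus---is likewise not something the paper addresses in this proof; since the boundary components of $S_\Omega$ are straight Euclidean circles and the gluing is by an isometry, the tiling extends across the seam without further argument. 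So your plan is correct but overengineered: drop the Hauptvermutung and the seam worries, and what remains is precisely the paper's proof.
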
 
 \begin{proof}
 
 Given $(\Omega,\bord\Omega,{\mathcal T})$, glue together two copies of $S_{\Omega}$ (their existence is guaranteed by Theorem~\ref{Th:ladder}) along corresponding boundary components.  This results in a flat surface $S=S_{\Omega}\bigcup\limits_{ \bord\Omega }S_{\Omega}$ of genus $(m-1)$ and a mapping ${\bar f}$ which restricts to $f$ on each copy.
 
\end{proof}
 
\medskip
We are now ready to address the case of an arbitrary closed surface of genus $m\geq 1$,  the main purpose of this addendum. Henceforth, we will use  the following standard notation. If $K$ is a complex, then $|K|$ denotes the union of the elements in $K$, endowed with the subspace topology induced by the topology on $R^3$.
 
\smallskip

 \begin{Thm}[Discrete Uniformization of Closed Surfaces]
 \label{Th:closedsurface}
Let  $(X,{\mathcal T})$ be a closed, triangulated surface of genus $m$, $m\geq 1$.  Then there exists a mapping $f$ which associates to each edge in a refined triangulation of ${\mathcal  T}^{(1)}$ a unique  Euclidean rectangle in a singular flat surface which is homeomorphic to $X$ and denoted by $S$. Each one of the cone singularities in $S$ is an integer multiple of $\pi$. Moreover,  the collection of these rectangles forms a tiling of $S$. Finally, $S$ admits a pair of pants decomposition whose dividing curves have Euclidean lengths given by ${\rm (1)-(3)}$ of Theorem~\ref{Th:ladder}.
 
\end{Thm}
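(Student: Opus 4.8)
The plan is to reduce Theorem~\ref{Th:closedsurface} to the already-established Theorem~\ref{Th:ladder} and Corollary~\ref{Cor:surface} by exhibiting $X$ as the combinatorial double of a triangulated planar domain. The starting observation is purely topological: a closed orientable surface of genus $m$ is homeomorphic to the double of a disk-with-$m$-holes, that is, of an $(m+1)$-connected, planar, Jordan domain $\Omega$. Indeed, if $\Omega$ has $\chi(\Omega)=1-m$ and $\bord\Omega$ consists of $m+1$ circles, then $\chi(\Omega\cup_{\bord\Omega}\Omega)=2(1-m)=2-2m$, the Euler characteristic of the genus-$m$ surface. Concretely, one realizes $X$ symmetrically in $\mathbb{R}^3$ and cuts along the fixed locus $\Gamma$ of the reflection; this exhibits an orientation-reversing involution $\tau$ on $X$ whose fixed set $\Gamma$ is a disjoint union of $m+1$ simple closed curves, and either half is the desired domain.

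First I would upgrade this topological decomposition to a simplicial one, and this is where the \emph{Hauptvermutung} for $2$-manifolds with boundary enters. Equip the model double with a symmetric ``standard'' triangulation for which $\tau$ is simplicial and $\Gamma$ is a subcomplex. The homeomorphism between $X$ and this model is, after passage to a common subdivision, a PL homeomorphism; transporting the standard structure back produces a refinement $\mathcal{T}'$ of the given triangulation $\mathcal{T}$ on which $\tau$ acts simplicially with $\Gamma=\mathrm{Fix}(\tau)$ a subcomplex. This $\mathcal{T}'$ is precisely the ``refined triangulation of $\mathcal{T}^{(1)}$'' appearing in the statement. Cutting $(X,\mathcal{T}')$ along $\Gamma$ now returns a genuine triangulated $(m+1)$-connected, planar, Jordan region $(\Omega,\bord\Omega,\mathcal{T}'|_\Omega)$, with $E_1$ the outer boundary circle and $E_2=E_2^1\sqcup\cdots\sqcup E_2^{m}$ the inner ones.

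With this domain in hand, the remaining steps are direct invocations of the earlier results. I would fix a conductance function on $\Omega$, solve the discrete Dirichlet problem to obtain $g$ with its energy $E(g)$ and boundary normal derivatives, and apply Theorem~\ref{Th:ladder} to produce the ladder of singular pairs of pants $S_\Omega$ together with the boundary- and energy-preserving edge-to-rectangle tiling $f$. Doubling $S_\Omega$ along $\bord\Omega$ exactly as in the proof of Corollary~\ref{Cor:surface} yields a flat surface $S=S_\Omega\cup_{\bord\Omega}S_\Omega$ with cone singularities, of genus $m$, carrying the induced map $\bar f$ and satisfying $2E(g)=\mathrm{Area}(S)$. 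Because the combinatorial double of $(\Omega,\mathcal{T}'|_\Omega)$ is $(X,\mathcal{T}')$ by construction of $\tau$, the surface $S$ is homeomorphic to $X$. The curves $\Gamma$ together with the internal dividing curves of the two ladders assemble into a pants decomposition of $S$ whose lengths are the prescribed normal-derivative sums, and the cone angles are integer multiples of $\pi$: the interior cone angles already have this form in each $S_\Omega$, while along the gluing curves the two right-angled corners coming from the two copies combine symmetrically, so that the total angle at each glued vertex is an even multiple of $\pi/2$, hence an integer multiple of $\pi$.

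I anticipate that the one genuinely new and delicate point is the second step, producing, via the \emph{Hauptvermutung}, a refinement of $\mathcal{T}$ that is simultaneously symmetric under the involution and cuts along $\Gamma$ into an honest triangulated planar domain. Once $X$ is presented as such a combinatorial double, Theorem~\ref{Th:ladder} and Corollary~\ref{Cor:surface} apply essentially verbatim, and the verification of the genus, the homeomorphism type, the energy identity, the pants lengths, and the $\pi$-integrality of the cone angles is routine bookkeeping.
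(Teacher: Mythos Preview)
Your strategy coincides with the paper's: split $X$ into two planar, $(m+1)$-connected halves, use the Hauptvermutung to make the halves combinatorially identical, apply Theorem~\ref{Th:ladder} with the same conductances to each, and glue the resulting flat pieces. The organization differs, however, and your version has a gap at precisely the step you flag as delicate.

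You try to realize the symmetry globally, by producing a \emph{simplicial involution} $\tau$ on a refinement $\mathcal{T}'$ of $\mathcal{T}$, pulled back from a symmetric model. But the Hauptvermutung only gives a simplicial isomorphism $\mathcal{T}'\cong\mathcal{S}'$ for some subdivision $\mathcal{S}'$ of the model triangulation $\mathcal{S}$; the involution $\tau$ is simplicial on $\mathcal{S}$, not necessarily on the finer $\mathcal{S}'$, so its transport to $\mathcal{T}'$ need not be simplicial. Upgrading a PL involution to a simplicial one on an invariant subdivision can be done in dimension two, but that is an additional equivariant argument, not a corollary of the Hauptvermutung as you invoke it.

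The paper sidesteps this by reversing the order of operations. It first cuts $(X,\mathcal{T})$ along $m+1$ disjoint $1$-cycles chosen in $\mathcal{T}^{(1)}$ into two genus-zero pieces $(P_1,\mathcal{T}_1)$ and $(P_2,\mathcal{T}_2)$; these are homeomorphic by the classification of surfaces with boundary, and the Hauptvermutung, applied to the two pieces \emph{separately}, yields subdivisions $L_1,L_2$ with a combinatorial isomorphism between them. Transporting conductances along this isomorphism and applying Theorem~\ref{Th:ladder} to each piece gives two isometric ladders $S_{P_1},S_{P_2}$, which are then glued. No global simplicial symmetry on $X$ is ever required, because only the flat outputs---not the triangulations---are matched. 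The paper also handles $m=1$ separately, cutting along a single meridian and invoking the annulus theorem, whereas your route would absorb the torus into the same argument via the double of an annulus.
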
 
\begin{proof}
We first assume that $X$, considered now as a simplicial complex,  is a surface with negative Euler characteristic $\chi(X)= 2 - 2m$.  
Let $\gamma$ be a disjoint union of $m+1$ embedded, closed, $1$-cycles in ${\mathcal T}^{(1)}$ such that if  $(S, {\mathcal T})$ is cut along $\gamma$, the resulting pieces 
$(P_1,{\mathcal T}_1)$ and  $(P_1,{\mathcal T}_1)$,  are triangulated, genus $0$, pair of pants, each of which having $m+1$ boundary components; where  ${\mathcal T}_1={\mathcal T}|P_1$ and ${\mathcal T}_2={\mathcal T}|P_2$. Note that the boundary components of $(P_1,{\mathcal T}_1)$ and  of $(P_1,{\mathcal T}_1)$ are in one to one correspondence with the components of $\gamma$.

\smallskip

The classification of $2$-manifolds with boundary (see for instance \cite[section 5]{Moi}) 
 implies that  $|P_1|$ and $|P_2|$ are homeomorphic. Hence, it follows from the Hauptvermutung in dimension $2$  
(see for instance \cite{Moi, Pap})
that $P_1$ and $P_2$ are combinatorially equivalent.  Henceforth, let $L_1$ be a subdivision of ${\mathcal T}_1$ and $L_2$ a subdivision of ${\mathcal T}_2$, such that  $(P_1, L_1)$ and  $(P_2, L_2)$ are combinatorially isomorphic.

\smallskip

Let us choose the same conductance constants,  ${\mathcal C}$, for $L_1$ and $L_2$. From a topological perspective, only the {\it planarity} of the domain  $\Omega$  was used in the proof of Theorem~\ref{Th:ladder}. Hence, we may apply Theorem~\ref{Th:ladder} to the (isomorphic) networks $(P_1, L_1)$ and $(P_2, L_2)$ respectively. Once the conductance constants are given,  the assertions of Theorem~\ref{Th:ladder} are determined solely by the combinatorial isomorphism class of the triangulation. Therefore, it follows that the two  ladder of singular pair of pants $S_{P_1}$  and $ S_{P_2}$, whose existence is guaranteed by Theorem 0.1, are in fact identical. 

\smallskip

Therefore, in this case the assertions of Theorem~\ref{Th:closedsurface} now follow by setting $S$ to be the singular flat surface obtained by gluing the two isometric singular pair of pants 
$S_{P_1}$  and $ S_{P_2}$ along matching boundary components. Also, \cite[Equation (4.12)]{Her1} justifies the assertion  regarding  the cone angles.

\smallskip

We now treat the case of $m=1$, i.e.,  $(X, {\mathcal T})$ is a triangulated torus.  Choose in ${\mathcal T}^{(1)}$
an embedded, piecewise-linear, $1$-cycle, $\tau$, that is a meridian. Hence, when $(X,{\mathcal T})$ is cut along $\tau$, the result is a triangulated cylinder denoted by $S_{\tau}$. Let $\tau_1$ and $\tau_2$ be the two boundary components of $S_{\tau}$.  Theorem 0.4 in \cite{Her1} provides a discrete uniformization in the case of a triangulated annulus. Since only the planarity of the annulus was used in the proof, we may apply it to the case of a triangulated cylinder. In particular,  the proof shows that the image of the cylinder $S_{\tau}$ under the mapping $f$ is a straight Euclidean cylinder whose height equals $k$ and whose circumference equals 
$$
\sum_{x\in\tau_1} \frac{\partial g}{\partial n}(x).
$$
Hence, in this case the assertions of Theorem~\ref{Th:closedsurface} follow by letting $S$ be the flat torus obtained by gluing the top and the bottom of this 
Euclidean cylinder thus obtained, by an isometry.
 
\end{proof}
 
 \ack\  We thank  Ed Chien and Feng Luo for expressing their interest in our work, and for a sketch of their research 
 plan to address (following ideas of Danny Calegari) a different path to tiling of closed surfaces by squares.
 
\end{document}